\numberwithin{equation}{section}
\theoremstyle{plain}
  \newtheorem{theorem}{Theorem}[section]
  \newtheorem*{theorem*}{Theorem}
  \newtheorem{proposition}[theorem]{Proposition}
  \newtheorem*{fact*}{Fact}
  \newtheorem{lemma}[theorem]{Lemma}
\theoremstyle{definition}
  \newtheorem{definition}[theorem]{Definition}
\theoremstyle{remark}
  \newtheorem*{remark}{Remark}
  \newtheorem*{acknowledgements}{Acknowledgements}
  \newtheorem{example}[theorem]{Example}
\numberwithin{equation}{section}
\newcommand{\R}{{\mathbb{R}}}
\renewcommand{\H}{{\mathbb{H}}}
\renewcommand{\S}{{\mathbb{S}}}
\renewcommand{\phi}{\varphi}
\renewcommand{\epsilon}{\varepsilon}
\newcommand{\inner}[2]{\left\langle{#1},{#2}\right\rangle}
\DeclareMathOperator{\Vol}{Vol}
\newcommand{\setm}{\,;\,}
\newcommand{\II}{\textrm{II}}
\newcommand{\Nabla}{\overline{\nabla}}
\newcommand{\pos}{\vb*{p}}
\newcommand{\dos}{\vb*{q}}
\newcommand{\vbH}{\vb*{H}}
\title[]{The volume of conformally flat manifolds as hypersurfaces in the light-cone}
\author[]{Riku Kishida}
\address[Riku Kishida]{Department of Mathematical and Computing Sciences, Tokyo Institute of Technology, Tokyo 152-8552, Japan}
\email{kishida.r.aa@m.titech.ac.jp}
\date{April 23, 2024.}
\subjclass[2020]{Primary 53C42; Secondly 53B30, 53C50.}
\keywords{light-cone, conformally flat, null hypersurface, wave front}
\begin{document}

\begin{abstract}
  In this paper, we focus on a conformally flat Riemannian manifold $(M^n,g)$ of dimension $n$ isometrically immersed into the $(n+1)$-dimensional light-cone $\Lambda^{n+1}$ as a hypersurface.
  We compute the first and the second variational formulas on the volume of such hypersurfaces.
  Such a hypersurface $M^n$ is not only immersed in $\Lambda^{n+1}$ but also isometrically realized as a hypersurface of a certain null hypersurface $N^{n+1}$ in the Minkowski spacetime, which is different from $\Lambda^{n+1}$.
  Moreover, $M^n$ has a volume-maximizing property in $N^{n+1}$.
\end{abstract}

\maketitle

\section*{Introduction}
Let $\R^{n+2}_1$ be the $(n+2)$-dimensional Minkowski spacetime of signature $(-+\dots+)$, and denote by $\inner{\,}{\,}$ the canonical Lorentzian inner product on $\R^{n+2}_1$.
The subset
\[\Lambda^{n+1}:=\left\{\vb*{x}\in\R^{n+2}_1\setm\langle\vb*{x},\vb*{x}\rangle=0\right\}\]
is called the $(n+1)$-dimensional \textit{light-cone}, which is a hypersurface in $\R^{n+2}_1$ and has a cone-like singularity at the origin and the induced metric on $\Lambda^{n+1}$ is degenerate at each point.

Let $M^n$ be an $n$-dimensional smooth manifold, and let $\pos:M^n\to\Lambda^{n+1}$ be an immersion.
If the induced metric on $M^n$ by $\pos$ is nondegenerate at each point in $M^n$, the map $\pos$ is called a \textit{spacelike hypersurface} in the light-cone $\Lambda^{n+1}$.
The following characterization of conformally flat Riemannian manifolds is known.
\begin{fact*}[Asperti-Dajczer~\cite{AD89}]
  Let $(M^n,g)$ be an $n(\ge 3)$-dimensional simply connected Riemannian manifold.
  Then, there exists an isometric immersion $\pos$ from $M^n$ to $\Lambda^{n+1}$ if and only if $(M^n,g)$ is conformally flat.
  Moreover, $\pos$ is unique up to orientation preserving isometries of $\Lambda^{n+1}$.
\end{fact*}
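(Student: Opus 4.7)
The plan rests on recognising the light-cone $\Lambda^{n+1}$ as a conformal model for the round sphere $\S^n$. Writing any non-zero null vector uniquely as $\lambda(1,v)$ with $\lambda>0$ and $v$ in the unit sphere $\S^n\subset\R^{n+1}$, I would first verify that the spacelike section $\Lambda^{n+1}\cap\{x_0=1\}$ is isometric to the round $\S^n$, and that the projection $\pi\colon\Lambda^{n+1}\setminus\{\vb*{0}\}\to\S^n$, $\lambda(1,v)\mapsto v$, is a conformal submersion: the degenerate induced metric on $\Lambda^{n+1}$ equals $\lambda^2\,\pi^*g_{\S^n}$, with the null generators as kernel. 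This identifies the Möbius sphere with the leaf space of the null generators in $\Lambda^{n+1}$, and exhibits the orientation-preserving isometries of $\Lambda^{n+1}$ in $\R^{n+2}_1$ as precisely the Möbius group of $\S^n$.

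For the \emph{only if} direction, let $\pos\colon M\to\Lambda^{n+1}$ be an isometric immersion. Nondegeneracy of the induced metric $g$ forces $\pos$ to be nowhere tangent to a null generator, so $\phi:=\pi\circ\pos\colon M\to\S^n$ is a local diffeomorphism. Writing $\lambda(p)$ for the $x_0$-coordinate of $\pos(p)$, the formula from the previous paragraph gives
\[g=\pos^*\inner{\,}{\,}=\lambda^2\,\phi^*g_{\S^n},\]
so $(M,g)$ is locally conformally equivalent to $\S^n$ and therefore conformally flat.

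For the \emph{if} direction, I would invoke Kuiper's developing theorem: a simply connected conformally flat Riemannian manifold of dimension $n\ge3$ admits a conformal local diffeomorphism $\phi\colon M\to\S^n$, unique up to post-composition by a Möbius transformation of $\S^n$. Writing the conformal relation as $g=e^{2\sigma}\phi^*g_{\S^n}$ for some $\sigma\in C^\infty(M)$, I set
\[\pos(p):=e^{\sigma(p)}\bigl(1,\phi(p)\bigr)\in\R^{n+2}_1.\]
The image lies in $\Lambda^{n+1}$ because $\inner{(1,v)}{(1,v)}=-1+|v|^2=0$ for $v\in\S^n$. A direct differentiation, using $\inner{\phi}{d\phi}=0$ to kill the cross terms, collapses to $\pos^*\inner{\,}{\,}=e^{2\sigma}\phi^*g_{\S^n}=g$.

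For uniqueness, two isometric immersions $\pos_1,\pos_2$ descend via $\pi$ to developing maps of the same conformal structure, hence differ by a Möbius transformation; the latter lifts to an orientation-preserving element of the Lorentz group $\mathrm{O}(n+1,1)$ that stabilises $\Lambda^{n+1}$. After applying this lift the only remaining freedom is along the null generators, and the isometry condition immediately pins down the conformal factors $e^{\sigma_1}=e^{\sigma_2}$, yielding $\pos_1=\pos_2$. I expect the main obstacle to be the appeal to Kuiper's theorem: producing a \emph{global} developing map on a simply connected $M$ requires the vanishing of the Weyl tensor (for $n\ge4$) or the Cotton tensor (for $n=3$) combined with a monodromy argument, and this is the substantive analytic step; the remaining arguments are short linear algebra and a direct computation.
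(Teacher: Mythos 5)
The paper does not prove this statement at all: it is quoted as a background Fact with the citation \cite{AD89}, so there is no internal argument to compare yours against; I can only judge the proposal on its own merits. On that score your outline is the standard conformal-model argument and its computational steps are correct: the identity that the degenerate metric on the cone is $\lambda^{2}\pi^{*}g_{\S^n}$, the ``only if'' direction (a spacelike tangent plane cannot contain the radial null direction, which lies in the radical of the induced metric, so $\pi\circ\pos$ is a conformal local diffeomorphism into $\S^n$), the explicit lift $\pos=e^{\sigma}(1,\phi)$ with the cross terms killed by $\inner{\phi}{d\phi}=0$, and the fiberwise rescaling argument $\pos_2=\mu\,L\pos_1$ with $\mu\equiv 1$ forced by isometry. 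As you acknowledge, however, the entire substance of the theorem is delegated to Kuiper's developing theorem together with Liouville rigidity of conformal maps for $n\ge 3$; that is precisely where the hypotheses $n\ge 3$ and simple connectedness do their work (and why uniqueness fails for $n=2$, as the paper remarks), so a proof that merely cites these results is an outline rather than a proof of the hard part.

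Two smaller points need tidying. First, your parametrization $\lambda(1,v)$, $\lambda>0$, covers only the future half-cone and omits the vertex; you should observe that a connected spacelike immersion misses the origin (the velocity of a curve through a preimage of the vertex would be a limit of rescaled null vectors, hence null, contradicting positive definiteness) and lies in a single component, or else let $\lambda$ range over $\R\setminus\{0\}$. Second, the final claim that the Lorentz lift of the Möbius transformation is an \emph{orientation preserving} isometry of $\Lambda^{n+1}$ is asserted rather than checked, and that is exactly the content of the phrase in the statement; one must track how orientation-reversing Möbius maps versus time-orientation and the two components of the cone interact. As an aside, a route more in the spirit of the rest of the paper would derive uniqueness from submanifold theory: by \eqref{eq_second_form} and the Gauss equation the shape operator $A$ is completely determined by the metric (it is essentially the Schouten tensor), so congruence follows from the fundamental theorem of submanifolds; your Möbius-group argument buys instead a transparent identification of the ambient symmetry group with the conformal group of $\S^n$.
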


In the case of $n=2$, any $(M^2,g)$ can be locally isometrically immersed in $\Lambda^3$.
However, such immersions are not unique in general (cf. \cite{LUY11}).

Spacelike hypersurfaces in $\Lambda^{n+1}$ have the duality, which was shown by Espinar-G{\'{a}}lvez-Mira~\cite{EGM09}, Izumiya~\cite{Izumiya09} and Liu-Jung~\cite{LJ08} independently.
More precisely, there exists a unique map $\dos:M^n\to\Lambda^{n+1}$ satisfying
\begin{equation}\label{eq_rel_dual_map}
  \inner{\pos}{\dos}=1,\quad\inner{X}{\dos}=0\quad(X\in\mathfrak{X}(M^n)),
\end{equation}
where $\mathfrak{X}(M^n)$ denotes the set of tangent vector fields on $M^n$.
The map $\dos$ is called the \textit{dual map} with respect to $\pos$.
If $\dos$ is regarded as a vector field along $\pos$, then $\dos$ has the role of the unit normal vector field of $\pos$.
Moreover, the mean curvature function can be defined using $\dos$, and it coincides with a constant multiple of the scalar curvature $S$ of $(M^n,g)$.
In Liu-Umehara-Yamada~\cite{LUY11}, several geometric properties of $\dos$ are discussed in terms of a coherent tangent bundle over $M^n$.

We fix a simply connected conformally flat Riemannian manifold $(M^n,g)$ and assume that $M^n$ is isometrically immersed in $\Lambda^{n+1}(\subseteq\R^{n+2}_1)$.
We denote by $\pos:M^n\to\Lambda^{n+1}$ the corresponding isometric immersion. 
Then, the scalar curvature $S$ of $(M^n,g)$ is the solution of the first variation on the volume, which was shown by Andersson-Metzger~\cite{AM10}, Honda-Izumiya~\cite{HI15} and Liu-Jung~\cite{LJ08}.
In these papers, the second variational formula on the volume is shown.

Based on these results, in this paper, we compute the first and the second variational formulas and confirm the non-positivity of the second variation with respect to a certain variational vector field $X$ which is transversal to $\Lambda^{n+1}$ without any restriction of the dimension $n$.
At the same time, the geometrical meaning of the variational vector field $X$ is clarified.
As a consequence, we show that a conformally flat manifold $(M^n,g)$ induces a hypersurface $N^{n+1}$ in $\R^{n+2}_1$, which is called the \textit{null-space} (cf. Definition \ref{def_null_space}).
Then, we can regard $M^n$ as a hypersurface isometrically embedded in the null space $N^{n+1}$, not the light-cone $\Lambda^{n+1}$.
The null-space $N^{n+1}$ is a hypersurface ruled by $\dos$ in $\R^{n+2}_1$ and the induced metric on $N^{n+1}$ is degenerate everywhere.
We remark that $N^{n+1}$ is an L-complete null wave front in the sense of Akamine-Honda-Umehara-Yamada~\cite{AHUY22}.
When $S$ is identically zero, then $(M^n,g)$ can be considered as a zero mean curvature hypersurface with maximal volume in $N^{n+1}$.
The main result of this paper is as follows:

\begin{theorem*}\label{thm_intro}
  Let $(M^n,g)$ be an $n(\ge 2)$-dimensional oriented conformally flat Riemannian manifold and $\pos:M^n\to\Lambda^{n+1}$ an isometric immersion.
  Let $N^{n+1}$ be the null space with respect to $\pos$, and we can regard $\pos$ as an isometric embedding from $M^n$ to $N^{n+1}$.
  Then, the following statements are equivalent.
  \begin{enumerate}[\normalfont(a)]
    \item The scalar curvature $S$ of $(M^n,g)$ is identically zero.
    \item For any compactly supported variation of $\pos$ in $N^{n+1}$, the first variation on the volume is zero.
  \end{enumerate}
  Moreover, if one of these two conditions holds, the second variation is nonpositive for any compactly supported variation of $\pos$ in $N^{n+1}$.
\end{theorem*}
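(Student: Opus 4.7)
The plan is to reduce the problem to variations in the direction $\dos$, expand the induced volume form using a clean perfect-square factorization of the metric deformation, and then read off both variational formulas.

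Since $N^{n+1}$ is ruled by $\dos$, any variation vector field $V$ of $\pos$ tangent to $N^{n+1}$ decomposes as $V = V^T + \phi\dos$ with $V^T \in \mathfrak{X}(M^n)$ and $\phi$ a compactly supported function on $M^n$. The tangential component generates a diffeomorphism of $M^n$ and preserves volume, so it suffices to restrict attention to
\[
\pos_t := \pos + t\phi\,\dos,
\]
which lies in $N^{n+1}$ by the definition of the null-space.

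Next I compute the induced metric $g_t(X,Y) := \inner{d\pos_t(X)}{d\pos_t(Y)}$. Using $\inner{\dos}{\dos}=0$, $\inner{X}{\dos}=0$ from \eqref{eq_rel_dual_map}, and $\inner{\dos}{\Nabla_X\dos}=\tfrac{1}{2}X\inner{\dos}{\dos}=0$, all $d\phi$-terms drop out, giving
\[
g_t(X,Y) = g(X,Y) - 2t\phi\,g(AX,Y) + t^2\phi^2\,g(AX,AY),
\]
where $A:TM^n\to TM^n$ is the shape operator characterized by $g(AX,Y) = \inner{\II(X,Y)}{\dos}$. The key input here is that the normal frame $\{\pos,\dos\}$ of $M^n$ in $\R^{n+2}_1$ is null with $\inner{\pos}{\dos}=1$, so differentiating $\inner{\pos}{\dos}=1$ and $\inner{\dos}{\dos}=0$ shows that $\Nabla_X\dos = -AX$ is purely tangential to $M^n$; this makes the identification above immediate.

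Passing to the $(1,1)$-tensor $T := g^{-1}g_t = I - 2t\phi A + t^2\phi^2 A^2 = (I - t\phi A)^2$, a perfect-square factorization, I obtain
\[
\frac{dV_t}{dV_g} = \sqrt{\det T} = \det(I - t\phi A) = 1 - t\phi\,\mathrm{tr}(A) + \tfrac{t^2\phi^2}{2}\bigl[(\mathrm{tr}\,A)^2 - \mathrm{tr}(A^2)\bigr] + O(t^3).
\]
Integrating, the first variation is $-\int_{M^n}\phi\,\mathrm{tr}(A)\,dV_g$. Since $\mathrm{tr}(A) = \inner{\vbH}{\dos}$ is, by the fact recalled in the introduction, a nonzero constant multiple of the scalar curvature $S$, the fundamental lemma of the calculus of variations yields the equivalence (a)$\iff$(b). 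Under $S\equiv 0$, i.e.\ $\mathrm{tr}(A)\equiv 0$, the second variation collapses to $-\int_{M^n}\phi^2\,\mathrm{tr}(A^2)\,dV_g$, which is nonpositive because $A$ is $g$-symmetric and hence $\mathrm{tr}(A^2)=\sum_i\lambda_i^2\ge 0$. The main obstacle, once the correct class of variations is identified, is verifying that $\Nabla_X\dos$ has no component along either null direction $\pos$ or $\dos$; without this the metric deformation would not factor as $(I-t\phi A)^2$ and the second-order analysis would be significantly more delicate. Fortunately this comes directly from the defining relations \eqref{eq_rel_dual_map} of the dual map.
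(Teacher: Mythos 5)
Your pointwise computations are correct and in fact give a nice shortcut past the paper's general machinery: the observation that $\Nabla_X\dos=-AX$ is purely tangential, the resulting exact formula $g_t=g\bigl((I-t\phi A)\cdot,(I-t\phi A)\cdot\bigr)$, the first variation $-\int_{M^n}\phi\,\tr(A)\,dV_g$, and the equivalence of (a) and (b) via Proposition~\ref{prop_scalar_is_mean} and the fundamental lemma are all sound (for (b)$\Rightarrow$(a) you only need the explicit variations $\pos+t\phi\dos$, which do lie in $N^{n+1}$, exactly as in Lemma~\ref{lem_example}).

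The genuine gap is in the second-variation claim, and it sits precisely at the point the paper is organized around. You assert that ``it suffices to restrict attention to $\pos_t=\pos+t\phi\dos$'', i.e.\ to variations whose normal displacement is \emph{linear} in $t$. That reduction is not justified: because the mean curvature vector $\vbH=\tr(A)\pos-n\dos$ is nowhere zero, the second variation is \emph{not} determined by the first-order variational field; it depends on the second-order jet of the variation through the term $\inner{\Nabla_t\overline{X}}{\vbH}$ in \eqref{eq_second_variation_general}. Even granting the tangential-reparametrization step (which you only assert; the paper proves it in Proposition~\ref{prop_null_space_ch} using an inverse-function-theorem and compactness argument to get a uniform $\delta$), a general compactly supported variation in $N^{n+1}$ becomes $\pos+\sigma(t,x)\dos$ with $\sigma(0,\cdot)=0$ but $\partial_t^2\sigma|_{t=0}$ arbitrary, not $\sigma=t\phi$. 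Its second variation carries the extra term $-\int_{M^n}\bigl(\partial_t^2\sigma|_{t=0}\bigr)\tr(A)\,dV_g$, which your argument never addresses and which vanishes only because $\tr(A)\equiv0$ under (a); this is exactly the role of ``characteristic variations'' and Proposition~\ref{prop_second_variation_ch} in the paper. The fix is short within your own framework—your determinant formula holds for general $\sigma$, and $\partial_t^2|_{t=0}\det(I-\sigma A)=-\bigl(\partial_t^2\sigma\bigr)\tr(A)+\bigl(\partial_t\sigma\bigr)^2\bigl[(\tr A)^2-\tr(A^2)\bigr]$—but as written you have proved nonpositivity only for a restricted subfamily of variations, not for every compactly supported variation in $N^{n+1}$ as the theorem asserts.
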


For the second variation of a zero mean curvature hypersurface in $\R^{n+1}$ defined on the domain $D(\subseteq\R^n)$, the minimality is achieved only if $D$ is sufficiently small.
The main theorem is much stronger assertion, that is, the maximality of a zero scalar curvature hypersurface in $\Lambda^{n+1}$ is independent of the choice of the domain $D$.

The paper is organized as follows:
In Section \ref{sec_1}, several properties of hypersurfaces in the light-cone are given as a preliminary.
In Section \ref{sec_2}, we show the variational formulas on the volume of hypersurfaces in $\Lambda^{n+1}$.
In Section \ref{sec_3}, we introduce the null-space $N^{n+1}$ induced by a given hypersurface $M^n$ in $\Lambda^{n+1}$.
In Section \ref{sec_4}, we present some examples of conformally flat Riemannian manifolds in $\Lambda^{n+1}$ with vanishing scalar curvatures and the null-spaces induced by them.

\section{Hypersurfaces in the light-cone}\label{sec_1}

Throughout this section, we assume that $(M^n,g)$ denotes an $n(\ge 2)$-dimensional conformally flat Riemannian manifold and $\pos:M^n\to\Lambda^{n+1}$ an isometric immersion.
Since the light-cone $\Lambda^{n+1}$ is a hypersurface in $\R^{n+2}_1$, the manifold $M^n$ can be considered as an immersed submanifold in $\R^{n+2}_1$ whose codimension is equal to~$2$.
Using $\pos$, we identify a tangent vector of $M^n$ with a vector of $\R^{n+2}_1$.
We denote by $TM^n$ and $NM^n$ the tangent bundle and the normal bundle of $M^n$, respectively.
In addition, let $\mathfrak{X}(M^n)$ be the set of tangent vector fields on $M^n$, and let $\Gamma(NM^n)$ be the set of normal vector fields along $\pos$.
Since $\inner{\pos}{\pos}=0$ and $\pos$ is orthogonal to the tangent space $TM^n$, there exists the dual map $\dos:M^n\to\Lambda^{n+1}$ with respect to $\pos$, satisfying \eqref{eq_rel_dual_map}.
Then, $\{\pos,\dos\}$ is the frame field of $NM^n$ defined on $M^n$.
Let $\nabla$ and $\Nabla$ be the Levi-Civita connections of $(M^n,g)$ and $\R^{n+2}_1$ respectively, and let $\nabla^\bot$ be the normal connection on $NM^n$.
The following proposition holds.
\begin{proposition}[\cite{AD89}]\label{prop_parallel}
  If we regard $\pos$ and $\dos$ as the normal vector field along $\pos$,
  they are parallel with respect to the normal connection $\nabla^\bot$.
\end{proposition}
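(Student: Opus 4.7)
The plan is to work directly from the defining relations in \eqref{eq_rel_dual_map} together with $\inner{\pos}{\pos}=0=\inner{\dos}{\dos}$, and use the fact that $\{\pos,\dos\}$ is a frame of $NM^n$ with nondegenerate Gram matrix $\left(\begin{smallmatrix}0&1\\1&0\end{smallmatrix}\right)$, so that a normal vector is determined by its pairings with $\pos$ and $\dos$.

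First I would handle $\pos$. Since $\pos:M^n\to\R^{n+2}_1$ is the position vector of the immersion, for any $X\in\mathfrak{X}(M^n)$ one has $\Nabla_X\pos=X$, which is tangent to $M^n$. Hence the normal projection vanishes, giving $\nabla^\bot_X\pos=0$.

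Next I would handle $\dos$. Differentiating the identity $\inner{\pos}{\dos}=1$ along $X$ yields $\inner{\Nabla_X\pos}{\dos}+\inner{\pos}{\Nabla_X\dos}=0$, and since $\Nabla_X\pos=X$ is tangent while $\dos$ is normal, the first term is zero, so $\inner{\pos}{\Nabla_X\dos}=0$. Similarly, differentiating $\inner{\dos}{\dos}=0$ gives $\inner{\dos}{\Nabla_X\dos}=0$. Because any $\xi\in NM^n$ can be written $\xi=a\pos+b\dos$ with $b=\inner{\xi}{\pos}$ and $a=\inner{\xi}{\dos}$, these two vanishing pairings force the normal component of $\Nabla_X\dos$ to be zero, so $\nabla^\bot_X\dos=0$.

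There is no real obstacle here; the only point to watch is that the argument requires $\{\pos,\dos\}$ to be a genuine frame of $NM^n$, which is guaranteed by $\inner{\pos}{\dos}=1\neq 0$ and the rank-$2$ condition already established before the proposition. The computation $\Nabla_X\pos=X$ is the standard identification of a position vector's differential with the identity on tangent vectors in $\R^{n+2}_1$, transported via $\pos$.
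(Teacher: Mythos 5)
Your proof is correct and follows essentially the same route as the paper: the identity $\Nabla_X\pos=X$ handles $\pos$, and your explicit differentiation of $\inner{\pos}{\dos}=1$ and $\inner{\dos}{\dos}=0$ combined with nondegeneracy of the frame $\{\pos,\dos\}$ is just an unpacking of the paper's appeal to metric compatibility of $\nabla^\bot$ with the induced inner product on $NM^n$.
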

\begin{proof}
  Since $\Nabla_X\pos=X\;(X\in\mathfrak{X}(M^n))$, $\pos$ is parallel with respect to $\nabla^\bot$.
  Moreover, $\dos$ is parallel, since the normal connection is compatible with the inner product on $NM^n$ induced from $\R^{n+2}_1$.
\end{proof}
Let $\II:\mathfrak{X}(M^n)\times\mathfrak{X}(M^n)\to\Gamma(NM^n)$ be the second fundamental form of $\pos$, that is, we define
\[\II(X,Y):=(\Nabla_XY)^\bot\quad(X,Y\in\mathfrak{X}(M^n)),\]
where $\bot$ denotes the projection into the normal bundle $NM^n$.
In addition, we define the map $A:\mathfrak{X}(M^n)\to\mathfrak{X}(M^n)$ as
\[\inner{AX}{Y}:=\inner{\II(X,Y)}{\dos}.\]
The map $A$ is called the \textit{shape operator} of $\pos$. Obviously, $A$ is a symmetric tensor on $M^n$.
We also define the normal vector field $\vbH\in\Gamma(NM^n)$ as
\[\vbH:=\sum_{i=1}^n\II(e_i,e_i),\]
where $\{e_1,\dots,e_n\}$ is an orthonormal frame field of $TM^n$.
We call $\vbH$ the \textit{mean curvature vector field} of $\pos$ in $\R^{n+2}_1$.
\begin{proposition}\label{prop_mean_curvature_vector}
  The mean curvature vector field $\vbH$ is written as
  \begin{equation}\label{eq_mean_curvature}
    \vbH=\tr(A)\pos-n\dos,
  \end{equation}
  where $\tr(A)$ represents the trace of the shape operator $A$.
\end{proposition}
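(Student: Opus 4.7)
The plan is to expand $\vbH$ in the dual frame $\{\pos,\dos\}$ of $NM^n$ and identify the two coefficients by pairing against $\dos$ and $\pos$ respectively. Writing $\vbH=\alpha\pos+\beta\dos$, the relations $\inner{\pos}{\pos}=\inner{\dos}{\dos}=0$ and $\inner{\pos}{\dos}=1$ give immediately $\alpha=\inner{\vbH}{\dos}$ and $\beta=\inner{\vbH}{\pos}$, so the task reduces to computing these two inner products.

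For $\alpha$, I would use the definition of the shape operator directly: since $\inner{\II(e_i,e_i)}{\dos}=\inner{Ae_i}{e_i}$, summing over an orthonormal frame yields
\[\inner{\vbH}{\dos}=\sum_{i=1}^n\inner{Ae_i}{e_i}=\tr(A).\]

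For $\beta$, the key fact is $\Nabla_X\pos=X$ for $X\in\mathfrak{X}(M^n)$, which was already used in the proof of Proposition~\ref{prop_parallel}. Since $\pos\in\Gamma(NM^n)$, we have $\inner{\II(X,Y)}{\pos}=\inner{\Nabla_XY}{\pos}$. Differentiating the identity $\inner{Y}{\pos}=0$ along $X$ and applying compatibility of $\Nabla$ with the inner product gives
\[\inner{\Nabla_XY}{\pos}=-\inner{Y}{\Nabla_X\pos}=-\inner{Y}{X}=-g(X,Y).\]
Taking $X=Y=e_i$ and summing yields $\inner{\vbH}{\pos}=-n$, hence $\beta=-n$ and the formula follows.

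There is essentially no obstacle: the argument is a direct two-line computation once one recognizes that the position field $\pos$, regarded as a normal vector field, satisfies $\Nabla_X\pos=X$, so that the component of $\vbH$ along $\dos$ records the trace of $A$ while the component along $\pos$ records (minus) the trace of the induced metric, namely $-n$.
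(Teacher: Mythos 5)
Your proof is correct and takes essentially the same route as the paper: the paper decomposes $\II(X,Y)$ in the null frame $\{\pos,\dos\}$, using $\inner{\II(X,Y)}{\dos}=\inner{AX}{Y}$ and $\inner{\II(X,Y)}{\pos}=-\inner{X}{Y}$ (via $\Nabla_X\pos=X$), and then traces, which is exactly your computation carried out before rather than after summing over the orthonormal frame.
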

\begin{proof}
  By using $\inner{\pos}{\dos}=1$, for each $X,Y\in\mathfrak{X}(M)$, we obtain
  \begin{equation}\begin{split}
    \II(X,Y)
    &=\inner{\II(X,Y)}{\dos}\pos+\inner{\II(X,Y)}{\pos}\dos\\
    &=\inner{AX}{Y}\pos-\inner{\Nabla_X\pos}{Y}\dos\\
    &=\inner{AX}{Y}\pos-\inner{X}{Y}\dos.\label{eq_second_form}
  \end{split}\end{equation}
  By contracting both sides of the above equation with respect to $g$, we obtain \eqref{eq_mean_curvature}.
\end{proof}

As in \cite{EGM09, Izumiya09, LJ08}, $\tr(A)$ can be expressed as a constant multiple of the scalar curvature $S$ of $(M^n,g)$.
\begin{proposition}\label{prop_scalar_is_mean}
  The scalar curvature $S$ of $(M^n,g)$ satisfies the following identity
  \[S=-2(n-1)\tr(A).\]
\end{proposition}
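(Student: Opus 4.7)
The plan is to derive the identity from the Gauss equation for the codimension-$2$ immersion $\pos:M^n\to\R^{n+2}_1$. Because the Minkowski ambient space is flat and the normal bundle is spanned by $\{\pos,\dos\}$, the Gauss equation reduces to
\[
\inner{R(X,Y)Z}{W} = \inner{\II(X,W)}{\II(Y,Z)} - \inner{\II(Y,W)}{\II(X,Z)}
\]
for $X,Y,Z,W\in\mathfrak{X}(M^n)$, where $R$ denotes the Riemann curvature tensor of $(M^n,g)$. The derivation of this identity uses only the Levi-Civita connection $\Nabla$ and the orthogonal splitting $T\R^{n+2}_1|_{M^n}=TM^n\oplus NM^n$, so the Lorentzian signature of the ambient metric causes no difficulty.

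The decisive simplification comes from \eqref{eq_second_form}, which writes $\II$ as a combination of the null pair $\{\pos,\dos\}$. In view of $\inner{\pos}{\pos}=\inner{\dos}{\dos}=0$ and $\inner{\pos}{\dos}=1$, the $\pos\pos$ and $\dos\dos$ contributions to the inner products on the right-hand side vanish, and only the mixed terms survive. Substitution therefore yields a curvature formula of Kulkarni--Nomizu type, linear in the shape operator~$A$ and the metric~$g$, with no quadratic-in-$A$ pieces.

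From this point onward the proof is a pure contraction. Tracing with $Y=Z=e_i$ over an orthonormal frame $\{e_i\}$, using the symmetry of $A$ together with the identity $\sum_i g(X,e_i)e_i=X$, produces a Ricci tensor of the form
\[
\Ric(X,W) = -(n-2)\inner{AX}{W} - \tr(A)\,g(X,W).
\]
A further trace with $X=W=e_j$ then gives $S=-(n-2)\tr(A)-n\,\tr(A)=-2(n-1)\tr(A)$, which is the asserted identity.

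No genuine obstacle arises; the argument is entirely computational. The only point requiring attention is the consistent tracking of signs and the fixing of a convention for the Ricci and scalar curvatures (here $\Ric(X,W)=\sum_i\inner{R(X,e_i)e_i}{W}$ and $S=\tr_g\Ric$) so that the coefficient $-2(n-1)$ comes out correctly.
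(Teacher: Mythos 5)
Your proposal is correct and follows essentially the same route as the paper: apply the Gauss equation for the flat ambient $\R^{n+2}_1$, substitute the expression \eqref{eq_second_form} for $\II$ in the null frame $\{\pos,\dos\}$ so that only the mixed terms survive, and contract. The only cosmetic difference is that you contract in two stages (first to the Ricci tensor, then to $S$), while the paper traces twice at once; the coefficients $-(n-2)-n=-2(n-1)$ agree with the paper's computation.
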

\begin{proof}
  We denote by $R$ the curvature tensor of $(M^n,g)$, that is, we define
  \[R(X,Y)Z:=\nabla_X\nabla_YZ-\nabla_Y\nabla_XZ-\nabla_{[X,Y]}Z\quad(X,Y,Z\in\mathfrak{X}(M^n)),\]
  where $[\,,\,]$ represents the Lie bracket of vector fields.
  By Gauss equation and \eqref{eq_second_form}, for each $X,Y,Z,W\in\mathfrak{X}(M^n)$ we obtain
  \begin{equation*}\begin{split}
    \inner{R(X,Y)Z}{W}
    &=\inner{\II(X,W)}{\II(Y,Z)}-\inner{\II(Y,W)}{\II(X,Z)}\\
    &=-\inner{X}{W}\inner{AY}{Z}-\inner{AX}{W}\inner{Y}{Z}\\
    &\qquad\qquad+\inner{Y}{W}\inner{AX}{Z}+\inner{AY}{W}\inner{X}{Z}.
  \end{split}\end{equation*}
  Let $\{e_1,\dots,e_n\}$ be an orthonormal frame field of $TM^n$.
  Then we obtain
  \begin{equation*}\begin{split}
    S&=\sum_{\substack{i,j=1\\i\neq j}}^n\inner{R(e_i,e_j)e_j}{e_i}
    =\sum_{\substack{i,j=1\\i\neq j}}^n\Bigl(-\inner{Ae_j}{e_j}-\inner{Ae_i}{e_i}\Bigr)\\
    &=-2(n-1)\sum_{i=1}^n\inner{Ae_i}{e_i}=-2(n-1)\tr(A).
  \end{split}\end{equation*}
  This completes the proof.
\end{proof}

\section{The variational formulas of hypersurfaces in the light-cone}\label{sec_2}

In this section, we compute the variational formulas on the volume of a hypersurface $\pos$ in $\Lambda^{n+1}$.
In Subsection \ref{subsec_1}, we compute the first variational formula with respect to $\pos$.
In Subsection \ref{subsec_2}, we make preparations for computing the second variational formula.
In Subsection \ref{subsec_3}, we compute the second variational formula with respect to $\pos$.

\subsection{The first variational formula for $\pos$}\label{subsec_1}

Let $(M^n,g)$ be an $n$-dimensional compact oriented conformally flat Riemannian manifold with boundary, and let $\pos:M^n\to\Lambda^{n+1}$ be an isometric immersion.
We consider a variation of $\pos$ with fixed boundary.
More precisely, let $\epsilon$ be a positive real number,
and consider a smooth map $F:(-\epsilon,\epsilon)\times M^n\to\R^{n+2}_1$ satisfying the following conditions:
\begin{itemize}
  \item For each $t\in (-\epsilon,\epsilon)$, the map $F(t,\cdot):M^n\to\R^{n+2}_1$ is a spacelike immersion.
  \item $F(0,x)=\pos(x)\quad(x\in M^n)$.
  \item For each point $x$ on the boundary of $M^n$, we have $F(\cdot,x)\equiv\pos(x)$.
\end{itemize}
For each $t\in (-\epsilon,\epsilon)$, we denote by $g_t$ the metric on $M^n$ induced from the spacelike immersion $F(t,\cdot):M^n\to\R^{n+2}_1$.
In addition, $dV_t$ denotes the volume form of the Riemannian manifold $(M^n,g_t)$, and $\Vol(t)$ is defined as the volume of $(M^n,g_t)$.
We denote by $X$ the variational vector field with respect to $F$, that is, we define $X:=\left.\frac{\partial }{\partial t}\right|_{t=0}F$.
It is a well-known fact that $\left.\frac{d}{dt}\right|_{t=0}dV_t=-\inner{X}{\vbH}dV_0$, where $\vbH$ is the mean curvature vector field of $\pos$.
By Proposition~\ref{prop_mean_curvature_vector}, the first variation on the volume can be written as
\begin{equation}\label{eq_first_variation_1}
  \left.\frac{d}{dt}\right|_{t=0}\Vol(t)=\int_{M^n}\Bigl(-\tr(A)\inner{X}{\pos}+n\inner{X}{\dos}\Bigr)dV_0,
\end{equation}
where $A$ represents the shape operator of $\pos$.
If the scalar curvature $S$ is identically zero (cf. Proposition \ref{prop_scalar_is_mean}), we obtain
\begin{equation}\label{eq_first_variation}
  \left.\frac{d}{dt}\right|_{t=0}\Vol(t)=n\int_{M^n}\inner{X}{\dos}dV_0.
\end{equation}
Therefore, in order to assure that the variation on the volume is critical, it is necessary to restrict the direction of the variational vector field.
For this purpose, we introduce the following.
\begin{definition}
  Let $F:(-\epsilon,\epsilon)\times M^n\to\R^{n+2}_1$ be a variation of $\pos$, and let $X$ be the variational vector field with respect to $F$.
  Then, $F$ is called an \textit{admissible variation} of $\pos$ if there exists a real-valued function $\phi$ defined on $M^n$ which satisfies $X=\phi\dos$,
  where $\dos$ is the dual map of $\pos$.
\end{definition}

If a variation $F$ is admissible, the second term of the integrand in \eqref{eq_first_variation_1} vanishes, so the following proposition holds.

\begin{proposition}\label{prop_first_variation_admissible}
  Let $(M^n,g)$ be an $n$-dimensional compact oriented conformally flat Riemannian manifold with boundary, and let $\pos:M^n\to\Lambda^{n+1}$ be an isometric immersion.
  For an admissible variation $F:(-\epsilon,\epsilon)\times M^n\to\R^{n+2}_1$ of $\pos$ with fixed boundary, the first variation on the volume is given by
  \[\left.\frac{d}{dt}\right|_{t=0}\textup{Vol}(t)=-\int_{M^n}\tr(A)\inner{X}{\pos}dV_0.\]
\end{proposition}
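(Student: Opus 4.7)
The plan is essentially an immediate specialization of the general first variation formula \eqref{eq_first_variation_1} derived just above the statement, so the work is minimal. I would not redo the derivation of \eqref{eq_first_variation_1}; I would simply pick it up at
\[\left.\frac{d}{dt}\right|_{t=0}\Vol(t)=\int_{M^n}\Bigl(-\tr(A)\inner{X}{\pos}+n\inner{X}{\dos}\Bigr)dV_0\]
and show that the second term drops out under the admissibility assumption.

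The key observation is that $\dos$ takes values in the light-cone $\Lambda^{n+1}$, so by the very definition of $\Lambda^{n+1}$ one has $\inner{\dos}{\dos}=0$ pointwise on $M^n$. For an admissible variation, by definition there exists a function $\phi\in C^\infty(M^n)$ with $X=\phi\dos$, and therefore
\[\inner{X}{\dos}=\phi\inner{\dos}{\dos}=0.\]
Plugging this into \eqref{eq_first_variation_1} eliminates the second integrand and produces exactly the claimed formula.

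There is no genuine obstacle in the argument; the whole point of introducing the class of admissible variations in the preceding definition is precisely to kill the $n\inner{X}{\dos}$ term, so the proposition is the immediate payoff. The only ancillary remark worth including is that the fixed-boundary hypothesis forces $\phi$ to vanish along $\partial M^n$, ensuring $X=\phi\dos$ is compatible with the boundary condition on $F$; this plays no role in the computation but makes the statement internally consistent.
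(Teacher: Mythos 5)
Your proof is correct and follows exactly the paper's own route: the paper likewise specializes \eqref{eq_first_variation_1} and notes that the second term vanishes for admissible variations, the reason being precisely that $\inner{X}{\dos}=\phi\inner{\dos}{\dos}=0$ since $\dos$ takes values in the light-cone.
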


\subsection{The second variational formula in the general setting}\label{subsec_2}

We next consider the second variation.
In the case of a minimal (or maximal) submanifold in a usual setting, the second variation depends only on the variational vector field at $t=0$.
However, this is not true for hypersurfaces in the light-cone, since the mean curvature vector field $\vbH$ never takes a zero vector.
So we consider an ``extended'' variational vector field as follows:
\begin{definition}
  Let $(\overline{M}^m,\inner{\cdot}{\cdot})$ be a pseudo-Riemannian manifold, and let $(M^n,g)$ be a compact oriented Riemannian manifold with boundary.
  For a given variation $F:(-\epsilon,\epsilon)\times M^n\to\overline{M}^m$ of an isometric immersion $f:M^n\to\overline{M}^m$, we define the vector field $\overline{X}$ along $F$ as $\overline{X}:=\partial F/\partial t$.
  This $\overline{X}$ is called the \textit{extended variational vector field} of $F$.
\end{definition}
Here, the vector field $\Nabla_t\overline{X}$ along $f$ is defined as \[\left(\Nabla_t\overline{X}\right)_x:=\left.\frac{\partial}{\partial t}\right|_{t=0}\overline{X}_{(t,x)}=\left.\frac{\partial^2}{\partial t^2}\right|_{t=0}F(t,x)\quad(x\in M^n).\]
It is known that the second variation on the volume in the general case can be expressed as described in the following proposition.
\begin{proposition}\label{prop_second_variation_general}
  Let $(\overline{M}^m,\inner{\cdot}{\cdot})$ be a pseudo-Riemannian manifold, let $(M^n,g)$ be a compact oriented Riemannian manifold with boundary.
  For a variation $F$ of an isometric immersion $f:M^n\to\overline{M}^m$ with fixed boundary, the second variation on the volume is given by
  \begin{equation}\begin{split}\label{eq_second_variation_general}
    \left.\frac{d^2}{dt^2}\right|_{t=0}\textup{Vol}(t)
    =&\int_{M^n}\Biggl(
    \sum_{i=1}^n\inner{\overline{R}(X,e_i)X}{e_i}
    +\sum_{i=1}^n\inner{(\Nabla_{e_i}X)^\bot}{(\Nabla_{e_i}X)^\bot}\\
    &\qquad\qquad
    -\sum_{i,j=1}^n\inner{\Nabla_{e_i}X}{e_j}\inner{\Nabla_{e_j}X}{e_i}\\
    &\qquad\qquad
    +\sum_{i,j=1}^n\inner{\Nabla_{e_i}X}{e_i}\inner{\Nabla_{e_j}X}{e_j}
    -\inner{\Nabla_t\overline{X}}{\vbH}
  \Biggr)dV_0,
  \end{split}\end{equation}
  where we define as follows:
  \begin{itemize}
    \item $X$ is the variational vector field of $F$,
    \item $\overline{X}$ is the extended variational vector field of $F$,
    \item $\{e_1,\dots,e_n\}$ is an orthonormal frame field of $TM^n$,
    \item $\vbH$ is the mean curvature vector field of $f$,
    \item $\Nabla$ is the Levi-Civita connection of $(\overline{M}^m,\inner{\cdot}{\cdot})$, and
    \item $\overline{R}$ is the curvature tensor of $(\overline{M}^m,\inner{\cdot}{\cdot})$.
  \end{itemize}
\end{proposition}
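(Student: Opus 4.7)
The plan is the standard variation-of-$\sqrt{\det g_t}$ computation, executed carefully because two features go beyond the minimal submanifold setting: the ambient metric is only pseudo-Riemannian, and the acceleration $\Nabla_t\overline{X}|_{t=0}$ of the variation need not vanish. I work in local coordinates $(x^1,\dots,x^n)$ on $M^n$, write $dV_t=\sqrt{\det G(t)}\,dx^1\cdots dx^n$ with $G_{ij}(t):=\inner{F_*\partial_i}{F_*\partial_j}$, and note that $\det G(t)>0$ for small $|t|$ since $(M^n,g)$ is Riemannian. At a chosen point I pick coordinates so that $G_{ij}(0)=\delta_{ij}$, identifying $\partial_i$ with the orthonormal $e_i$ there.

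Applying Jacobi's formula twice to $\sqrt{\det G(t)}$ I obtain at $t=0$
\begin{equation*}
  \frac{d^2}{dt^2}\bigg|_{t=0}\sqrt{\det G(t)}
  =\frac14\Big(\sum_i G'_{ii}(0)\Big)^{2}
   -\frac12\sum_{i,j}(G'_{ij}(0))^2
   +\frac12\sum_i G''_{ii}(0).
\end{equation*}
From $[\partial_t,\partial_i]=0$ together with metric compatibility I read off $G'_{ij}(0)=\inner{\Nabla_{e_i}X}{e_j}+\inner{\Nabla_{e_j}X}{e_i}$. For $G''_{ij}(0)$ I invoke the Ricci identity
\[\Nabla_t\Nabla_{\partial_i}\overline{X}-\Nabla_{\partial_i}\Nabla_t\overline{X}=\overline{R}(\overline{X},F_*\partial_i)\overline{X}\]
and evaluate at $t=0$; this produces the curvature contribution $\inner{\overline{R}(X,e_i)X}{e_i}$ together with $\inner{\Nabla_{e_i}(\Nabla_t\overline{X}|_{t=0})}{e_i}$ and $|\Nabla_{e_i}X|^2$. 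Substituting into the Jacobi expansion and using the orthogonal decomposition $|\Nabla_{e_i}X|^2=|(\Nabla_{e_i}X)^\bot|^2+\sum_j\inner{\Nabla_{e_i}X}{e_j}^2$ rearranges the resulting sums into the first four integrands of \eqref{eq_second_variation_general}.

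The remaining piece, $\sum_i\inner{\Nabla_{e_i}(\Nabla_t\overline{X}|_{t=0})}{e_i}$, is the main obstacle, since this is where the acceleration of the variation hides inside the quantity $\inner{\Nabla_t\overline{X}}{\vbH}$ appearing in the formula. I split $A:=\Nabla_t\overline{X}|_{t=0}=A^\top+A^\bot$. The tangential contribution is the intrinsic divergence $\operatorname{div}_{M^n}A^\top$, whose integral vanishes by Stokes' theorem because the fixed-boundary condition $F(\cdot,x)\equiv\pos(x)$ on $\partial M^n$ forces $\overline{X}$, and hence $\Nabla_t\overline{X}$, to vanish along $\partial M^n$. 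For the normal part, $\inner{A^\bot}{e_i}\equiv 0$ gives $\inner{\Nabla_{e_i}A^\bot}{e_i}=-\inner{A^\bot}{\II(e_i,e_i)}$, so summing over $i$ and integrating contributes exactly $-\inner{\Nabla_t\overline{X}}{\vbH}$. Assembling the pieces then yields \eqref{eq_second_variation_general}.
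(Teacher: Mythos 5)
Your proposal is correct and follows essentially the same route as the paper's appendix proof: a pointwise computation of the second derivative of the volume density (your Jacobi-formula expansion of $\sqrt{\det G(t)}$ is just a reorganization of the paper's differentiation of the first-variation density $\sum_{i,j}g_t^{ij}\inner{\Nabla_{e_{i,t}}\overline{X}}{e_{j,t}}\,dV_t$ in a frame with $\nabla_{e_i}e_j=0$ at a point), the same curvature commutation identity producing $\inner{\overline{R}(X,e_i)X}{e_i}$, and the same handling of the acceleration term, since your tangential/normal split of $\Nabla_t\overline{X}$ combined with Stokes' theorem is precisely the paper's identity $\sum_i\inner{\Nabla_{e_i}\Nabla_t\overline{X}}{e_i}=-\inner{\Nabla_t\overline{X}}{\vbH}+\sum_i e_i\inner{\Nabla_t\overline{X}}{e_i}$ followed by the divergence theorem. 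The details check out, including the vanishing of $\Nabla_t\overline{X}$ along $\partial M^n$ forced by the fixed-boundary condition, so no gap remains.
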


The proof of Proposition~\ref{prop_second_variation_general} is given in Appendix \ref{appendix}.

\subsection{The second variational formula for $\pos$}\label{subsec_3}

In this subsection, we compute the second variational formula for hypersurfaces in $\Lambda^{n+1}$.
Let $F$ be an admissible variation of $\pos$ with fixed boundary, and let $X$ be the variational vector field of $F$.
We calculate the integrand function of the integral on $M^n$ in \eqref{eq_second_variation_general} for each term.
Since the Minkowski spacetime $\R^{n+2}_1$ is flat, the first term of the integrand in \eqref{eq_second_variation_general} is identically zero.
By the definition of an admissible variation, we can write $X=\phi\dos$ using some real-valued smooth function $\phi$ on $M^n$.
Then, for any tangent vector field $V\in\mathfrak{X}(M^n)$, we can calculate $\Nabla_VX$ as follows:
\begin{equation}\label{eq_first_derivative_vv}
  \Nabla_VX=(V\phi)\dos+\phi\Nabla_V\dos=(V\phi)\dos-\phi AV,
\end{equation}
where we used Proposition~\ref{prop_parallel} in the second equality above.
For a given $\{e_1,\dots,e_n\}$ as an orthonormal frame field of $TM^n$, the second term of the integrand in \eqref{eq_second_variation_general} can be calculated as follows:
\[\sum_{i=1}^n\inner{(\Nabla_{e_i}X)^\bot}{(\Nabla_{e_i}X)^\bot}=\sum_{i=1}^n\inner{(e_i\phi)\dos}{(e_i\phi)\dos}=0.\]
By using the fact that the shape operator $A$ is symmetric, the third term of the integrand in \eqref{eq_second_variation_general} can be written as
\begin{equation*}\begin{split}
  \sum_{i,j=1}^n\inner{\Nabla_{e_i}X}{e_j}\inner{\Nabla_{e_j}X}{e_i}
  &=\sum_{i,j=1}^n\inner{\phi Ae_i}{e_j}\inner{\phi Ae_j}{e_i}
  =\sum_{i,j=1}^n\phi^2\inner{Ae_i}{e_j}\inner{e_j}{Ae_i}\\
  &=\sum_{i=1}^n\phi^2\inner{Ae_i}{Ae_i}
  =\sum_{i=1}^n\phi^2\inner{A^2e_i}{e_i}
  =\phi^2\tr(A^2).
\end{split}\end{equation*}
In addition, the fourth term of the integrand in \eqref{eq_second_variation_general} can be expressed as
\[\sum_{i,j=1}^n\inner{\Nabla_{e_i}X}{e_i}\inner{\Nabla_{e_j}X}{e_j}=\sum_{i,j=1}^n\inner{\phi Ae_i}{e_i}\inner{\phi Ae_j}{e_j}=\phi^2\tr(A)^2.\]
We focus on the fifth term of the integrand in \eqref{eq_second_variation_general}, namely $\inner{\Nabla_t\overline{X}}{\vbH}$.
If the scalar curvature $S$ is identically zero, this term can be written as
\[\inner{\Nabla_t\overline{X}}{\vbH}=-n\inner{\Nabla_t\overline{X}}{\dos}.\]
The above calculations yield the following.
\begin{proposition}\label{prop_second_variation_ad}
  Let $(M^n,g)$ be an $n$-dimensional compact oriented conformally flat Riemannian manifold with boundary and $\pos:M^n\to\Lambda^{n+1}$ an isometric immersion.
  Let $F:(-\epsilon,\epsilon)\times M^n\to\R^{n+2}_1$ be an admissible variation of $\pos$ with fixed boundary.
  If the scalar curvature $S$ is identically zero, then the second variation on the volume is given by
  \[\left.\frac{d^2}{dt^2}\right|_{t=0}\textup{Vol}(t)=\int_{M^n}\Biggl(-\inner{X}{\pos}^2\tr(A^2)+n\inner{\Nabla_t\overline{X}}{\dos}\Biggr)dV_0,\]
  where $X$ and $\overline{X}$ are the variational vector field and the extended variational vector field of $F$, respectively.
\end{proposition}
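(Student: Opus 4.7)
The plan is to apply Proposition \ref{prop_second_variation_general} with $\overline{M}^m = \R^{n+2}_1$ and $f = \pos$, and simplify each of the five terms of the integrand under the two hypotheses in force: $F$ is admissible, so $X = \phi\dos$ for some smooth function $\phi$ on $M^n$, and $S \equiv 0$, which by Proposition \ref{prop_scalar_is_mean} is equivalent to $\tr(A) = 0$.

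First I would observe that the curvature term $\sum_i \inner{\overline{R}(X,e_i)X}{e_i}$ vanishes because $\R^{n+2}_1$ is flat. Then I would exploit the formula $\Nabla_V X = (V\phi)\dos - \phi AV$ recorded in \eqref{eq_first_derivative_vv}, which cleanly splits $\Nabla_V X$ into its normal part $(V\phi)\dos$ and its tangential part $-\phi AV$. With this splitting in hand, the second term of \eqref{eq_second_variation_general} collapses to zero since $\dos$ is null; the third term reduces to $\phi^2 \tr(A^2)$ using the symmetry of $A$; and the fourth term reduces to $\phi^2 \tr(A)^2$, which vanishes thanks to the assumption $\tr(A) = 0$. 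For the last term, Proposition \ref{prop_mean_curvature_vector} shows that $\vbH = -n\dos$ under $S \equiv 0$, whence $\inner{\Nabla_t \overline{X}}{\vbH} = -n\inner{\Nabla_t \overline{X}}{\dos}$. Substituting these four simplifications back into \eqref{eq_second_variation_general} and rewriting $\phi = \inner{X}{\pos}$ via $\inner{\pos}{\dos} = 1$ yields the stated formula.

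There is no serious obstacle here: each ingredient has essentially been computed in the preceding paragraphs, and the hypothesis $\tr(A) = 0$ is invoked at exactly two places, namely to kill the fourth term and to reduce $\vbH$. The proof is therefore a bookkeeping exercise that assembles these identities via the general second-variation formula, with the only mild subtlety being to remember that the normal component of $\Nabla_V X$ is null and so contributes nothing to $\sum_i \inner{(\Nabla_{e_i}X)^\bot}{(\Nabla_{e_i}X)^\bot}$, which is precisely what makes the final expression so short.
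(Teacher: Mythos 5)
Your proposal is correct and follows essentially the same route as the paper: the paper's Subsection 2.3 performs exactly this term-by-term simplification of \eqref{eq_second_variation_general}, using flatness of $\R^{n+2}_1$, the splitting $\Nabla_V X=(V\phi)\dos-\phi AV$ from \eqref{eq_first_derivative_vv}, nullity of $\dos$, symmetry of $A$, and $\vbH=-n\dos$ under $\tr(A)=0$, with $\phi=\inner{X}{\pos}$ at the end.
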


If the term $\inner{\Nabla_t\overline{X}}{\dos}$ vanishes identically, the second variation does not take a positive value.
Namely, the variation on the volume is maximal.
This motivates us to consider the following variations, which are refinement of admissible variations.
\begin{definition}\label{def_characteristic}
  Let $F:(-\epsilon,\epsilon)\times M^n\to\R^{n+2}_1$ be an admissible variation of $\pos$.
  We denote by $X$ and $\overline{X}$ the variational vector field and the extended variational vector field of $F$, respectively.
  Then $F$ is called a \textit{characteristic variation} of $\pos$ if there exists a real-valued function $\phi$ defined on $M^n$ satisfying
  \[\Nabla_t\overline{X}=\phi\dos,\]
  where $\dos$ is the dual map of $\pos$.
\end{definition}
A simple example of characteristic variations is as follows:
\begin{example}\label{exam_char}
  Let $\phi$ be a real-valued function defined on $(-\epsilon,\epsilon)\times M^n$ satisfying $\phi(0,x)=0$ for each point $x$ of $M^n$.
  Let us consider the variation $F:(-\epsilon,\epsilon)\times M^n\to\R^{n+2}_1$ of $\pos$ described as
  \begin{equation}\label{eq_example_ch}
    F(t,x):=\pos(x)+\phi(t,x)\dos(x)\quad(t\in(-\epsilon,\epsilon),\;x\in M^n).
  \end{equation}
  Then, the variational vector field $X$ with respect to $F$ can be expressed as
  \[X_x=\left(\left.\frac{\partial}{\partial t}\right|_{t=0}\phi(t,x)\right)\dos(x)\quad(x\in M^n).\]
  This implies that $F$ is an admissible variation of $\pos$.
  Moreover, by calculating the second-order derivative of $F$ with respect to $t$, we obtain
  \[(\Nabla_t\overline{X})_x=\left(\left.\frac{\partial^2}{\partial t^2}\right|_{t=0}\phi(t,x)\right)\dos(x)\quad(x\in M^n).\]
  Therefore, $F$ is a characteristic variation of $\pos$.
\end{example}

In the case of characteristic variations, the second variational formula on the volume can be expressed as the following proposition.
\begin{proposition}\label{prop_second_variation_ch}
  Let $(M^n,g)$ be an $n$-dimensional compact oriented conformally flat Riemannian manifold with boundary, and let $\pos:M^n\to\Lambda^{n+1}$ be an isometric immersion.
  Let $F:(-\epsilon,\epsilon)\times M^n\to\R^{n+2}_1$ be a characteristic variation of $\pos$ with fixed boundary.
  If the scalar curvature is identically zero, then the second variation on the volume is given by
  \begin{equation}\label{eq_second_variation_lc}
    \left.\frac{d^2}{dt^2}\right|_{t=0}\textup{Vol}(t)=-\int_{M^n}\inner{X}{\pos}^2\tr(A^2)dV_0\le 0.
  \end{equation}
\end{proposition}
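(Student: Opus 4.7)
The plan is to reduce immediately to Proposition \ref{prop_second_variation_ad}: since every characteristic variation is admissible by Definition \ref{def_characteristic}, that proposition already supplies, under the hypothesis $S\equiv 0$,
\[\left.\frac{d^2}{dt^2}\right|_{t=0}\textup{Vol}(t)=\int_{M^n}\Bigl(-\inner{X}{\pos}^2\tr(A^2)+n\inner{\Nabla_t\overline{X}}{\dos}\Bigr)dV_0.\]
So the only thing left to do is to show that the second term in the integrand vanishes identically for characteristic variations, and then to verify pointwise nonpositivity of what remains.

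For the vanishing, I would invoke the defining property of a characteristic variation: there is a real-valued function $\psi$ on $M^n$ with $\Nabla_t\overline{X}=\psi\dos$. Since $\dos$ takes values in $\Lambda^{n+1}$, the identity $\inner{\dos}{\dos}=0$ holds pointwise, and therefore
\[\inner{\Nabla_t\overline{X}}{\dos}=\psi\inner{\dos}{\dos}=0.\]
Substituting this back into the formula from Proposition \ref{prop_second_variation_ad} gives the claimed equality \eqref{eq_second_variation_lc}.

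For the inequality, I would use that the shape operator $A$ is $g$-symmetric, as recorded just after its definition in Section \ref{sec_1}. Consequently, in a $g$-orthonormal eigenbasis $\{e_1,\dots,e_n\}$ of $TM^n$, the operator $A$ is represented by a real diagonal matrix with entries $\mu_1,\dots,\mu_n$, so $\tr(A^2)=\sum_{i=1}^n\mu_i^2\ge 0$. Combined with the trivial bound $\inner{X}{\pos}^2\ge 0$, the integrand is nonpositive pointwise on $M^n$, and the inequality follows by integration against $dV_0$. No real obstacle arises at this stage: essentially all of the analytic content was packaged into Propositions \ref{prop_second_variation_general} and \ref{prop_second_variation_ad}, and the present statement is a clean algebraic specialization relying only on $\dos$ being lightlike and $A$ being symmetric.
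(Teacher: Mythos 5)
Your proposal is correct and follows essentially the same route as the paper: reduce to Proposition~\ref{prop_second_variation_ad} via admissibility, kill the term $n\inner{\Nabla_t\overline{X}}{\dos}$ using $\Nabla_t\overline{X}=\psi\dos$ together with $\inner{\dos}{\dos}=0$, and conclude nonpositivity from the symmetry of $A$ giving $\tr(A^2)\ge 0$. In fact you make explicit the vanishing step that the paper's proof leaves implicit, which is a small improvement in exposition but not a different argument.
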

\begin{proof}
  By the definition of a characteristic variation, $F$ can be written as
  \[\Nabla_t\overline{X}=\phi\dos,\]
  where $\phi$ is a real-valued function on $M^n$.
  Since $F$ is also an admissible variation of $\pos$, from Proposition~\ref{prop_second_variation_ad} we obtain
  \[\left.\frac{d^2}{dt^2}\right|_{t=0}\textup{Vol}(t)=-\int_{M^n}\inner{X}{\pos}^2\tr(A^2)dV_0.\]
  Moreover, by the definition of a shape operator $A$, the $(1,1)$-tensor $A$ is symmetric.
  This implies that the value of $\tr(A^2)$ is nonnegative everywhere.
  Therefore, the following inequality holds.
  \[\left.\frac{d^2}{dt^2}\right|_{t=0}\textup{Vol}(t)\le 0.\]
\end{proof}

\begin{remark}
  The second variational formula stated in Proposition~\ref{prop_second_variation_ch} confirm the results given in \cite{AM10, HI15,LJ08}.
\end{remark}

\section{The null-spaces}\label{sec_3}

This section is the heart of this paper.
We will show that the second variational formula given in the previous section leads to the volume maximality of conformally flat manifolds with vanishing scalar curvature in a certain hypersurface in $\R^{n+2}_1$.
To interpret the variational formulas shown in the previous section geometrically, we introduce the notion of ``null-space'' as follows:

\begin{definition}\label{def_null_space}
  Let $(M^n,g)$ be an $n(\ge 2)$-dimensional conformally flat Riemannian manifold, and let $\pos:M^n\to\Lambda^{n+1}$ be an isometric immersion.
  We consider the map $\Phi_{\pos}:\R\times M^n\to\R^{n+2}_1$ given by
  \begin{equation}\label{eq_def_G}
    \Phi_{\pos}(t,x):=\pos(x)+t\dos(x)\quad(t\in\R,\;x\in M^n),
  \end{equation}
  which is called the \textit{extended ruled map} of $\pos$.
  We set $N^{n+1}:=\R\times M^n$, which is the domain of $\Phi_{\pos}$, and consider that $N^{n+1}$ is equipped with the metric $g_N$ defined as the pullback of $\inner{\,}{\,}$ by $\Phi_{\pos}$.
  This degenerate space $(N^{n+1},g_N)$ is called the \textit{null-space} with respect to $\pos$.
\end{definition}

Since $\Phi_{\pos}$ is an immersion on a tubular neighborhood of $\{0\}\times M^n$, we can regard $M^n$ as an embedded hypersurface in the null-space $N^{n+1}$.
Namely, let us consider a map $\iota:M^n\ni x\mapsto (0,x)\in N^{n+1}(=\R\times M^n)$.
Then the diagram

\begin{figure}[h!]%
 \begin{center}
       \includegraphics[height=1.8cm]{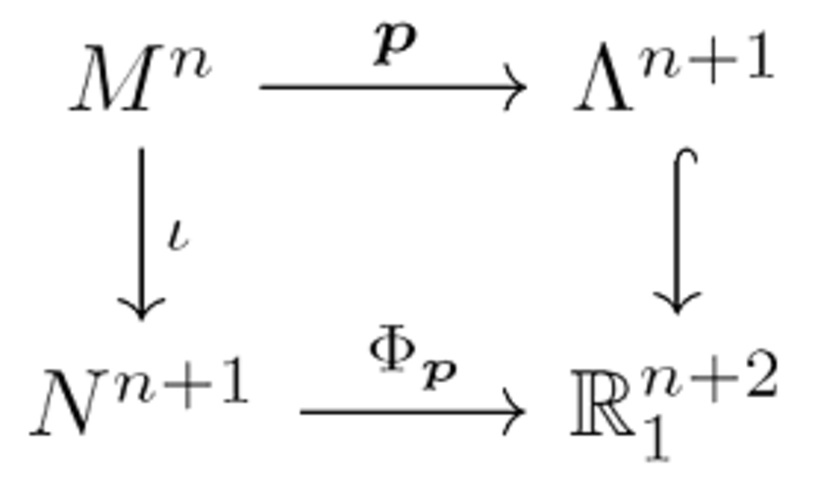} 
\end{center}
\end{figure}
\noindent
commutes and $\iota$ is an isometric embedding.
In particular, we can regard $M^n$ as a hypersurface isometrically embedded in $N^{n+1}$.
Therefore, the null-space $N^{n+1}$ have the role of an ambient space of $M^n$ which is different from the light-cone $\Lambda^{n+1}$.
Moreover, this space is a wave front with the completeness in the null direction.
Namely, the null hypersurface is foliated by entirety of light-like lines and the extended ruled map $\Phi_{\pos}$ is a wave front.
In other words, using the definition given in \cite{AHUY22}, the following proposition holds.

\begin{proposition}
  The extended ruled map $\Phi_{\pos}:N^{n+1}\to\R^{n+2}_1$ is an L-complete null wave front in $\R^{n+2}_1$.
\end{proposition}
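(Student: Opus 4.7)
The plan is to verify, in order, the three defining ingredients of an L-complete null wave front in the sense of Akamine-Honda-Umehara-Yamada~\cite{AHUY22}: a consistent null tangent line bundle for the degenerate metric $g_N$, the existence of a global Legendrian lift of $\Phi_{\pos}$ into the appropriate contact manifold, and completeness of the null leaves.

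First I would compute $d\Phi_{\pos}$ explicitly. From \eqref{eq_def_G} together with Proposition~\ref{prop_parallel} and Weingarten's formula, one obtains $d\Phi_{\pos}(\partial_t)=\dos$ and $d\Phi_{\pos}(X)=(\Id-tA)X$ for every $X\in T_xM^n$ (regarded as a horizontal tangent to $N^{n+1}$). The relations~\eqref{eq_rel_dual_map} then immediately yield
\[g_N(\partial_t,\partial_t)=\inner{\dos}{\dos}=0,\qquad g_N(\partial_t,X)=\inner{\dos}{(\Id-tA)X}=0,\]
so $\partial_t$ spans a canonical null line in each tangent space of $N^{n+1}$. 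Importantly, this persists at every point, including those where $\Id-tA$ is singular and $\Phi_{\pos}$ itself fails to be an immersion.

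Next I would exhibit an explicit Legendrian lift. The natural candidate is the map $L(t,x):=\bigl(\Phi_{\pos}(t,x),\dos(x)\bigr)$, valued in the contact manifold of (point, null-direction) pairs over $\R^{n+2}_1$. With respect to the standard contact form pulled back from $\inner{dp}{\nu}$ on the product, the Legendrian condition reduces to $\inner{d\Phi_{\pos}(\cdot)}{\dos}=0$, which follows from the two identities above. To see that $L$ is itself an immersion everywhere, suppose $dL(s\partial_t+X)=0$ at some $(t,x)$; the $\dos$-component forces $AX=0$, whence $(\Id-tA)X=X$, and then the $\Phi_{\pos}$-component $s\dos+X=0$ splits into its normal and tangent parts to give $s=0$ and $X=0$.

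Finally, L-completeness is transparent: the null leaves of $g_N$ are precisely the affine lines $t\mapsto\pos(x)+t\dos(x)$ for each fixed $x\in M^n$, and by the very construction $N^{n+1}=\R\times M^n$ they are parametrized by all of $t\in\R$. The main obstacle I expect is matching the construction above precisely to the contact-geometric framework of \cite{AHUY22} — in particular, identifying the correct contact manifold on the target side and confirming that their notion of L-completeness coincides with parametrization of each null leaf by $\R$. All the underlying differential-geometric computations, by contrast, reduce to repeated use of Proposition~\ref{prop_parallel} and the defining identities~\eqref{eq_rel_dual_map}.
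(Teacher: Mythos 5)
Your verification is correct: the computations $d\Phi_{\pos}(\partial_t)=\dos$ and $d\Phi_{\pos}(X)=(\Id-tA)X$, the everywhere-immersivity of the lift $(\Phi_{\pos},\dos)$ (using $\Nabla_X\dos=-AX$ and the splitting into tangent and normal parts), and the fact that each ruling $t\mapsto\pos(x)+t\dos(x)$ is an entire lightlike line are precisely the criteria of \cite{AHUY22} that need checking. The paper's own proof is a one-line appeal to those criteria, so your argument takes essentially the same route while supplying the verification the paper leaves to the reader.
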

\begin{proof}
  Using the criteria in \cite[Section 1]{AHUY22}, one can show that $\Phi_{\pos}$ is an L-complete null wave front.
\end{proof}

In addition, the extended ruled map $\Phi_{\pos}$ has the following interesting property.

\begin{proposition}
  If $t>0$, the image of $\Phi_{\pos}(t,\cdot)$ is contained as a hypersurface in the de Sitter spacetime whose constant curvature is $1/(2t)$.
  On the other hand, if $t<0$, the image of $\Phi_{\pos}(t,\cdot)$ is contained  as a hypersurface in the hyperbolic space whose constant curvature is $1/(2t)$.
\end{proposition}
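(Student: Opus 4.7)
The plan is to verify directly that for fixed $t$, the image $\Phi_{\pos}(t,M^n)$ lies on a pseudosphere in $\R^{n+2}_1$, and then to identify which one. The key observation is that both $\pos$ and $\dos$ take values in the light-cone $\Lambda^{n+1}$, so $\inner{\pos}{\pos}=\inner{\dos}{\dos}=0$, while the duality relation \eqref{eq_rel_dual_map} gives $\inner{\pos}{\dos}=1$.

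The first step is a one-line computation: expanding
\[\inner{\Phi_{\pos}(t,x)}{\Phi_{\pos}(t,x)}=\inner{\pos(x)}{\pos(x)}+2t\inner{\pos(x)}{\dos(x)}+t^2\inner{\dos(x)}{\dos(x)}=2t\]
for all $x\in M^n$. Hence for each fixed $t\neq 0$, the image $\Phi_{\pos}(t,M^n)$ is contained in the hyperquadric $Q_t:=\{\vb*{y}\in\R^{n+2}_1\setm\inner{\vb*{y}}{\vb*{y}}=2t\}$.

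The second step is to recall the standard classification of these hyperquadrics. When $t>0$, so $2t>0$, the quadric $Q_t$ is the de Sitter spacetime of radius $\sqrt{2t}$, a Lorentzian hypersurface of constant sectional curvature $\tfrac{1}{2t}$. When $t<0$, so $2t<0$, the quadric $Q_t$ consists of two sheets of a hyperboloid; each sheet is isometric to the hyperbolic space of radius $\sqrt{-2t}$ and sectional curvature $-\tfrac{1}{-2t}=\tfrac{1}{2t}$ (negative, as required). In both cases the curvature equals $1/(2t)$, matching the statement.

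The only genuine subtlety is to confirm that $\Phi_{\pos}(t,\cdot)$ actually lands as a hypersurface rather than something degenerate; this follows because $\Phi_{\pos}$ is an immersion on a tubular neighborhood of $\{0\}\times M^n$ as noted right after Definition~\ref{def_null_space}, and because $\Phi_{\pos}(t,\cdot)$ is transverse to the radial direction of $Q_t$ — indeed, the derivative of $\Phi_{\pos}(t,\cdot)$ in the $x$ variable spans an $n$-dimensional subspace, and for $t\neq 0$ this sits inside $T_{\Phi_{\pos}(t,x)}Q_t$ because differentiating $\inner{\Phi_{\pos}(t,\cdot)}{\Phi_{\pos}(t,\cdot)}=2t$ in $x$ shows that $\partial_x\Phi_{\pos}$ is orthogonal to $\Phi_{\pos}(t,x)$, the outward normal of $Q_t$. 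No serious obstacle is expected; the whole argument reduces to bookkeeping with the signature and sign conventions for curvature of the pseudospheres.
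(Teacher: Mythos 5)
Your proposal is correct and follows essentially the same route as the paper: the paper's entire proof is the one-line computation $\inner{\Phi_{\pos}(t,x)}{\Phi_{\pos}(t,x)}=2t$ using \eqref{eq_rel_dual_map}, from which the assertion is read off. Your additional remarks on the curvature normalization of the hyperquadrics and on transversality/immersivity merely make explicit what the paper leaves implicit, so there is nothing substantively different to compare.
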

\begin{proof}
  By \eqref{eq_rel_dual_map}, we obtain
  \[\inner{\Phi_{\pos}(t,x)}{\Phi_{\pos}(t,x)}=\inner{\pos(x)}{\pos(x)}+2t\inner{\pos(x)}{\dos(x)}+t^2\inner{\dos(x)}{\dos(x)}=2t,\]
  which proves the assertion.
\end{proof}

Since $M^n$ is a hypersurface in the null-space $N^{n+1}$, we can consider variations in $N^{n+1}$.
Let $G:(-\epsilon,\epsilon)\times M^n\to N^{n+1}$ be a variation of $\pos$ in $N^{n+1}$. More precisely, $G$ is a variation of the isometric embedding $\iota:M^n\ni x\mapsto (0,x)\in N^{n+1}$.
Therefore, $G(0,x)=(0,x)\;(x\in M^n)$ holds.

\begin{lemma}\label{lem_example}
  Let $\phi$ be a real-valued function on $M^n$.
  Then, there exists a variation $G:(-\epsilon,\epsilon)\times M^n\to N^{n+1}$ of $\pos$ satisfying that $F:=\Phi_{\pos}\circ G$ is a characteristic variation of $\pos$ in $\R^{n+2}_1$ and the variational vector field of $F$ coincides with $\phi\dos$.
\end{lemma}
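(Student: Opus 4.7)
The plan is to construct $G$ explicitly by the simplest possible Ansatz. I would set
\[G:(-\epsilon,\epsilon)\times M^n\longrightarrow N^{n+1}=\R\times M^n,\qquad G(t,x):=(t\phi(x),x),\]
with $\epsilon>0$ to be chosen later. Clearly $G(0,x)=(0,x)=\iota(x)$, and because its second component is the identity, each slice $G(t,\cdot)$ is a smooth embedding, so $G$ is a bona fide variation of $\iota$.

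Next I would compute $F:=\Phi_{\pos}\circ G$ directly from the defining formula \eqref{eq_def_G} of the extended ruled map:
\[F(t,x)=\pos(x)+t\phi(x)\,\dos(x).\]
This is exactly the expression appearing in Example \ref{exam_char}, with the two-variable function there specialized to $t\phi(x)$. I can therefore invoke that example to conclude that $F$ is a characteristic variation of $\pos$ whose variational vector field at $t=0$ equals $\phi(x)\dos(x)$. Equivalently, a direct calculation shows $\partial F/\partial t|_{t=0}=\phi(x)\dos(x)$ and $\partial^2 F/\partial t^2\equiv 0$, so the defining identity $\Nabla_t\overline{X}=\psi\dos$ of Definition \ref{def_characteristic} is satisfied with $\psi\equiv 0$.

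The only point left to verify is that $F(t,\cdot):M^n\to\R^{n+2}_1$ is a spacelike immersion for small $|t|$, as required by the setup in Subsection \ref{subsec_1}. Since $F(0,\cdot)=\pos$ is spacelike, its induced metric is positive definite at $t=0$ and varies smoothly in $t$, so positive definiteness persists on any compact set for small $|t|$; shrinking $\epsilon$ uniformly handles the compact case, and the noncompact case is handled locally. This is the only routine matter to check, and I do not expect a substantive obstacle; the content of the lemma lies not in any estimate but in the observation that one can realize any prescribed admissible direction $\phi\dos$ by a variation that stays inside the null-space $N^{n+1}$, which is precisely what is needed later to reinterpret the maximality conclusion of Proposition \ref{prop_second_variation_ch} as a statement about variations in $N^{n+1}$.
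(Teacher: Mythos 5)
Your construction $G(t,x)=(t\phi(x),x)$, the resulting formula $F(t,x)=\pos(x)+t\phi(x)\dos(x)$, and the appeal to Example~\ref{exam_char} are exactly the paper's own proof; the additional remarks on $\partial^2F/\partial t^2\equiv 0$ and on spacelikeness for small $|t|$ are harmless routine checks. The proposal is correct and takes essentially the same approach as the paper.
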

\begin{proof}
  If we set $G(t,x):=(t\phi(x),x)$, then we have \[F(t,x)=\pos(x)+t\phi(x)\dos(x).\]
  This map $F$ is one of characteristic variations given in Example~\ref{exam_char}, and clearly its variational vector field is $\phi\dos$.
\end{proof}

The following proposition shows that arbitrary variations in $N^{n+1}$ can be replaced by a characteristic variation with fixed the volume function.

\begin{proposition}\label{prop_null_space_ch}
  Let $(M^n,g)$ be a conformally flat Riemannian manifold with boundary, and let $G:(-\epsilon,\epsilon)\times M^n\to N^{n+1}$ be a variation of an isometric immersion $\pos:M^n\to\Lambda^{n+1}$ with fixed boundary, where $N^{n+1}$ is the null-space of $\pos$.
  Then, there exists $\delta\in (0,\epsilon)$ and a characteristic variation $F:(-\delta,\delta)\times M^n\to\R^{n+2}_1$ of $\pos$ with fixed boundary satisfying the following condition.
  \begin{itemize}
    \item The volume function $\textup{Vol}_F(t)$ of $F$ coincides with $\textup{Vol}_G(t)$, that is, the following equality holds:
    \[\textup{Vol}_F(t)=\int_{M^n}dV_t=\int_{M^n}d\widetilde{V}_t=\textup{Vol}_G(t)\quad(t\in(-\delta,\delta)),\]
    where $dV_t$ and $d\widetilde{V}_t$ denote the volume forms of the metric induced by $F$ and $G$, respectively {\rm (cf. Figure 1)}.
  \end{itemize}
\end{proposition}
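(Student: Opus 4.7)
The plan is to reparameterize $G$ by a time-dependent diffeomorphism of $M^n$ so that its composition with the extended ruled map $\Phi_{\pos}$ becomes a variation of the explicit form exhibited in Example~\ref{exam_char}, and hence a characteristic variation.

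Since $N^{n+1}=\R\times M^n$, any variation of $\iota:M^n\to N^{n+1}$ decomposes as $G(s,x)=(a(s,x),b(s,x))$, and the conditions $G(0,x)=(0,x)$ together with the fixed-boundary assumption force $a(0,\cdot)\equiv 0$, $b(0,\cdot)=\Id_{M^n}$, and $a(s,\cdot)\equiv 0$, $b(s,\cdot)=\Id$ on $\partial M^n$. For $s$ in a subinterval $(-\delta,\delta)\subseteq(-\epsilon,\epsilon)$ the map $b(s,\cdot)$ will be a diffeomorphism of $M^n$, so I set $\Psi_s:=b(s,\cdot)^{-1}$ and define
\[\tilde{G}(s,x):=G(s,\Psi_s(x))=(\tilde{a}(s,x),x),\qquad \tilde{a}(s,x):=a(s,\Psi_s(x)).\]
Composition with $\Phi_{\pos}$ then gives
\[F(s,x):=\Phi_{\pos}(\tilde{G}(s,x))=\pos(x)+\tilde{a}(s,x)\dos(x),\]
which is precisely the form of characteristic variation from Example~\ref{exam_char}; the fixed-boundary condition is preserved because $\Psi_s=\Id$ on $\partial M^n$, forcing $\tilde{a}(s,\cdot)\equiv 0$ there.

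To match volumes, I use two observations. First, the metric $g_N$ on $N^{n+1}$ is by definition $\Phi_{\pos}^*\inner{\,}{\,}$, so $F=\Phi_{\pos}\circ\tilde{G}$ induces on $M^n$ exactly the same metric as $\tilde{G}$, giving $\Vol_F(s)=\Vol_{\tilde{G}}(s)$. Second, $\tilde{G}(s,\cdot)=G(s,\cdot)\circ\Psi_s$ differs from $G(s,\cdot)$ only by the diffeomorphism $\Psi_s$ of $M^n$, so the induced metrics satisfy $\tilde{g}_s=\Psi_s^*g_s$ and change of variables yields $\Vol_{\tilde{G}}(s)=\Vol_G(s)$. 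The main technical step requiring care is the existence of a uniform $\delta>0$ on which $b(s,\cdot)$ is globally a diffeomorphism of $M^n$; in the compactly supported setting this is routine, since $b(s,\cdot)$ equals the identity outside a fixed compact set and is $C^1$-close to the identity on it for $s$ near $0$, so the Jacobian remains nonsingular and the map stays bijective.
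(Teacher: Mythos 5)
Your construction is essentially the paper's own proof: you decompose $G(t,x)=(\tau(t,x),\alpha(t,x))$, invert the $M^n$-component on a uniform interval $(-\delta,\delta)$, and compose with $\Phi_{\pos}$ to obtain a variation of the form $\pos(x)+\tilde a(t,x)\dos(x)$ as in Example~\ref{exam_char}, exactly as the paper does with its map $\beta$. Your volume comparison via $g_N=\Phi_{\pos}^*\inner{\,}{\,}$ and diffeomorphism invariance of the volume integral is just a slightly more explicit wording of the paper's concluding step, so the proposal is correct and follows the same route.
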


\begin{figure}[h!]%
 \begin{center}
       \includegraphics[height=2.7cm]{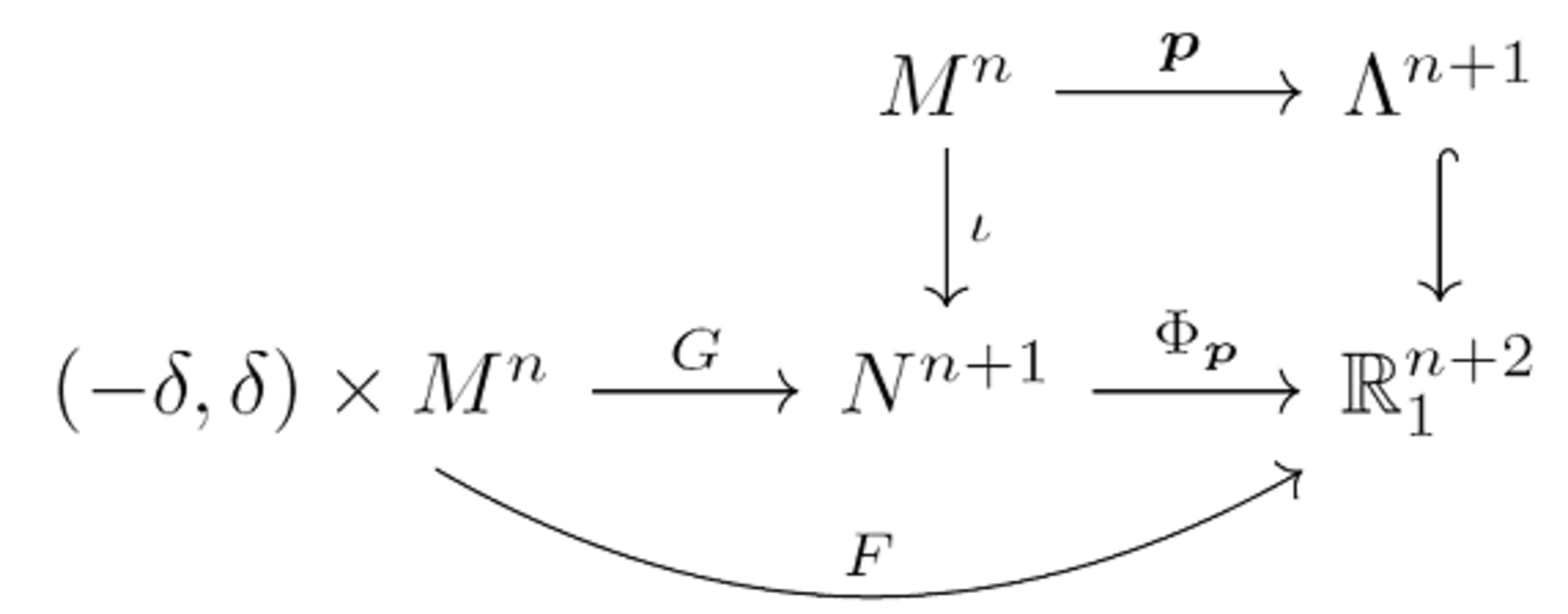} 
\end{center}
  \caption{The diagram related to $N^{n+1}$}%
\end{figure}

\begin{proof}
  We define two maps $\tau:(-\epsilon,\epsilon)\times M^n\to\R$ and $\alpha:(-\epsilon,\epsilon)\times M^n\to M^n$ as $G(t,x)=(\tau(t,x),\alpha(t,x))\in N^{n+1}$.
  Now consider the following map,
  \[\Phi:(-\epsilon,\epsilon)\times M^n\ni (t,x)\mapsto (t,\alpha(t,x))\in (-\epsilon,\epsilon)\times M^n.\]
  Since $G$ is a variation of $\pos$ with fixed boundary,
  \begin{itemize}
    \item $\tau(0,x)=0,\quad\alpha(0,x)=x\quad(x\in M^n)$, and
    \item $\alpha(t,x)=x\quad(x\in \partial M^n)$,
  \end{itemize}
  where $\partial M^n$ is the boundary of $M^n$.
  Therefore, for each point $x\in M^n$, the differential of $\Phi$ at $(0,x)$ is invertible.
  This implies that the map $\Phi$ is a diffeomorphism from some neighborhood of $(0,x)$ to another neighborhood of $(0,x)$.
  Since $M^n$ is compact, there exists $\delta\in(0,\epsilon)$ such that
  \[\left.\Phi\right|_{(-\delta,\delta)\times M^n}:(-\delta,\delta)\times M^n\to(-\delta,\delta)\times M^n\]
  is a diffeomorphism.
  Since $\alpha(t,\cdot):M^n\to M^n$ is a diffeomorphism for each $t\in (-\delta,\delta)$, we obtain a smooth map $\beta:(-\delta,\delta)\times M^n\to M^n$ which satisfies $\beta(t,\alpha(t,x))=x$.
  Here, we consider the following variation $F$,
  \[F:(-\delta,\delta)\times M^n\ni(t,x)\mapsto\pos(x)+\tau(t,\beta(t,x))\dos(x)\in\R^{n+2}_1.\]
  Since $F$ is a variation given in the Example \ref{exam_char}, $F$ is a characteristic variation of $\pos$ with fixed boundary.
  By the definition of $\beta$, it is clear that the image of $F$ coincides with that of $\Phi_{\pos}\circ G$.
  Therefore, both volume functions coincide.
\end{proof}

By Proposition~\ref{prop_null_space_ch}, we can show the main theorem.

\begin{proof}[Proof of Theorem in Introduction]
  Let $G$ be a compactly supported variation of $\pos$ in the null-space $N^{n+1}$ and suppose that the scalar curvature $S$ of $\pos$ is identically zero.
  By Proposition~\ref{prop_null_space_ch}, we can take a characteristic variation $F$ of $\pos$ in $\R^{n+2}_1$ such that the volume function of $F$ coincides with that of $G$.
  Since $F$ is characteristic, it follows from Propositions~\ref{prop_scalar_is_mean} and \ref{prop_first_variation_admissible} that the first variation on the volume with respect to $F$ is zero.
  Therefore, the first variation with respect to $G$ is also zero.
  
  Conversely, assuming that the first variation on the volume is zero for any compactly supported variation of $\pos$ in $N^{n+1}$.
  From Lemma~\ref{lem_example}, for any compactly supported real-valued function $\phi$, the integral $\int_{M^n}\tr(A)\phi dV_0$ is zero.
  Since we can show that $\tr(A)$ is identically zero by a well-known calculus, $S$ is also identically zero.
  
  Next, suppose that the scalar curvature is identically zero, and we will show that the second variation with respect to $G$ is nonpositive.
  As in the previous discussion, we take a characteristic variation $F$ in $\R^{n+2}_1$ such that the volume function of $F$ coincides with that of $G$ by Proposition~\ref{prop_null_space_ch}.
  Then, Proposition~\ref{prop_second_variation_ch} yields that the second variation with respect to $F$ is nonpositive.
  Therefore, the second variation with respect to $G$ is also nonpositive.
\end{proof}

\section{Examples}\label{sec_4}

\begin{example}
  One example of conformally flat Riemannian manifolds with zero scalar curvature is the Euclidean space $\R^n$.
  We can show by direct calculations that the following map $\pos$ gives an isometric immersion from $\R^n$ into $\Lambda^{n+1}$;
  \[\pos\colon\R^n\ni (x^1,\dots,x^n)\mapsto\left(\frac{1+\sum_{i=1}^n(x^i)^2}{2},\frac{-1+\sum_{i=1}^n(x^i)^2}{2},x^1,\dots,x^n\right)\in\Lambda^{n+1}.\]
  The dual $\dos:M^n\to\Lambda^{n+1}$ associated with $\pos$ takes the value $(-1,-1,0,\dots,0)$ identically.
  Therefore, the extended ruled map $\Phi_{\pos}$ with respect to $\pos$ is given by
  \[\Phi_{\pos}(t,x)=\left(\frac{1+\sum_{i=1}^n(x^i)^2}{2}-t,\frac{-1+\sum_{i=1}^n(x^i)^2}{2}-t,x^1,\dots,x^n\right)\;(t\in\R,x\in\R^n).\]
  Since $\Phi_{\pos}$ is injective and the image of $\Phi_{\pos}$ coincides with
  \[\{(y^1,y^2,\dots,y^{n+2})\in\R^{n+2}_1\setm y^1-y^2=1\},\]
  the null-space of $\pos$ can be regarded as a degenerated hyperplane in $\R^{n+2}_1$.
\end{example}

\begin{example}\label{example_2}
  The product manifold $\H^n\times\S^n$ is an example of conformally flat Riemannian manifolds with zero scalar curvature, where $\S^n$ is the unit sphere in $\R^{n+1}$ centered at the origin and $\H^n$ is defined as
  \[\H^n:=\{x=(x^1,\dots,x^{n+1})\in\R^{n+1}_1\setm\inner{x}{x}=-1,\;x^1>0\}.\]
  Since for each $(x,y)\in\H^n\times\S^n$ is a null vector of $\R^{2n+2}_1$, we can consider $\S^n\times\H^n$ as an embedded hypersurface in $\Lambda^{2n+1}$.
  This implies that $\S^n\times\H^n$ is a conformally flat Riemannian manifold.
  Let $\pos:\H^n\times\S^n\hookrightarrow\R^{2n+2}_1$ be the inclusion map. Then, the dual $\dos$ associated with $\pos$ is given by
  \[\dos(x,y)=\frac{1}{2}(-x,y)\quad(x\in\H^n,\;y\in\S^n).\]
  Therefore, the extended ruled map $\Phi_{\pos}$ with respect to $\pos$ can be written as
  \[\Phi_{\pos}(t,x,y)=\Bigl(\frac{2-t}{2}x,\frac{2+t}{2}y\Bigr)\quad(t\in\R,\;x\in\H^n,\;y\in\S^n).\]
  In this example, the scalar curvature of $\dos$ is also identically zero.
\end{example}

\begin{remark}
  When $n=2$, if $\pos:M^2\to\Lambda^{3}$ has zero scalar curvature, the dual map $\dos$ of $\pos$ has also zero scalar curvature on the regular set of $\dos$ (cf. \cite[Proposition~5.4]{LUY11}).
  In general, even if $\pos:M^n\to\Lambda^{n+1}$ has zero scalar curvature, it does not necessarily mean that the dual map $\dos$ of $\pos$ also does so, although the scalar curvature of $\dos$ given in Example~\ref{example_2} is identically zero.
\end{remark}

\begin{acknowledgements}
  The author would like to express my deep gratitude to Atsufumi Honda, Shunsuke Ichiki and Masaaki Umehara for many valuable discussions.
\end{acknowledgements}

\appendix
\section{Proof of Proposition~\ref{prop_second_variation_general}}\label{appendix}

In this appendix, we show Proposition~\ref{prop_second_variation_general}.
For the proof, we relied mainly on \cite[Section 6]{SW01}.

\begin{proof}[Proof of Proposition~\ref{prop_second_variation_general}]
  Fixed a point $x$ in $M^n$ and let $\{e_1,\dots,e_n\}$ be an orthonormal frame field of $TM^n$ satisfying
  \begin{equation}\label{eq_regular_onb}
    (\nabla_{e_i}e_j)_x=0\quad(i,j\in\{1,\dots,n\}),
  \end{equation}
  where $\nabla$ is the Levi-Civita connection of $(M^n,g)$.
  For each $t\in(-\epsilon,\epsilon)$ and $i\in\{1,\dots,n\}$, we can regard $e_i$ as a tangent vector field on $\{t\}\times M^n$, and we set $e_{i,t}(x):=dF_{(t,x)}(e_i)\;(x\in M^n)$.
  Then, by well-known facts on the first variation on the volume, we have
  \[\frac{d}{dt}dV_t=\sum_{i,j=1}^ng_t^{ij}\inner{\Nabla_{e_{i,t}}\overline{X}}{e_{j,t}}dV_t,\]
  where $\bigl(g_t^{ij}\bigr)$ is the inverse matrix of $\bigl(\inner{e_{i,t}}{e_{j,t}}\bigr)$.
  So, we obtain
  \begin{equation*}\begin{split}
    &\left.\frac{d^2}{dt^2}\right|_{t=0}dV_t
    =\left.\frac{d}{dt}\right|_{t=0}\left(\sum_{i,j=1}^ng_t^{ij}\inner{\Nabla_{e_{i,t}}\overline{X}}{e_{j,t}}\right)dV_0
    +\sum_{i=1}^n\inner{\Nabla_{e_{i}}X}{e_{i}}\left.\frac{d}{dt}\right|_{t=0}dV_t\\
    &\qquad=\left.\frac{d}{dt}\right|_{t=0}\left(\sum_{i,j=1}^ng_t^{ij}\inner{\Nabla_{e_{i,t}}\overline{X}}{e_{j,t}}\right)dV_0+\sum_{i,j=1}^n\inner{\Nabla_{e_i}X}{e_i}\inner{\Nabla_{e_j}X}{e_j} dV_0.
  \end{split}\end{equation*}
  If we set
  \[\psi(t):=\sum_{i,j=1}^ng_t^{ij}\inner{\Nabla_{e_{i,t}}\overline{X}}{e_{j,t}},\]
  we then have the following equality at $x$;
  \begin{align*}
    \left.\frac{d}{dt}\right|_{t=0}\psi
    &=\sum_{i,j=1}^n\left(\left.\frac{d}{dt}\right|_{t=0}g_t^{ij}\right)\inner{\Nabla_{e_{i}}X}{e_{j}}
    +\sum_{i=1}^n\left.\frac{d}{dt}\right|_{t=0}\inner{\Nabla_{e_{i,t}}\overline{X}}{e_{i,t}}\\
    &=\sum_{i,j=1}^n\left(-\left.\frac{d}{dt}\right|_{t=0}\inner{e_{i,t}}{e_{j,t}}\right)\inner{\Nabla_{e_{i}}X}{e_{j}}
    +\sum_{i=1}^n\inner{\Nabla_X\Nabla_{e_{i,t}}X}{e_{i,t}}\\
    &\qquad+\sum_{i=1}^n\inner{\Nabla_{e_i}X}{\Nabla_Xe_{i,t}}\\
    &=-\sum_{i,j=1}^n\inner{\Nabla_{e_i}X}{e_{j}}^2
    -\sum_{i,j=1}^n\inner{e_{i}}{\Nabla_{e_j}X}\inner{\Nabla_{e_i}X}{e_j}\\
    &\qquad+\sum_{i=1}^n\inner{\Nabla_X\Nabla_{e_{i,t}}\overline{X}}{e_i}
    +\sum_{i=1}^n\inner{\Nabla_{e_i}X}{\Nabla_{e_i}X}\\
    &=-\sum_{i=1}^n\inner{(\Nabla_{e_i}X)^\top}{(\Nabla_{e_i}X)^\top}
    -\sum_{i,j=1}^n\inner{e_{i}}{\Nabla_{e_j}X}\inner{\Nabla_{e_i}X}{e_j}\\
    &\qquad
    +\sum_{i=1}^n\inner{\overline{R}(X,e_i)X}{e_i}
    +\sum_{i=1}^n\inner{\Nabla_{e_i}\Nabla_t\overline{X}}{e_i}
    +\sum_{i=1}^n\inner{\Nabla_{e_i}X}{\Nabla_{e_i}X}\\
    &=\sum_{i=1}^n\inner{(\Nabla_{e_i}X)^\bot}{(\Nabla_{e_i}X)^\bot}
    -\sum_{i,j=1}^n\inner{e_{i}}{\Nabla_{e_j}X}\inner{\Nabla_{e_i}X}{e_j}\\
    &\qquad
    +\sum_{i=1}^n\inner{\overline{R}(X,e_i)X}{e_i}
    +\sum_{i=1}^n\inner{\Nabla_{e_i}\Nabla_t\overline{X}}{e_i}\\
    &=\sum_{i=1}^n\inner{(\Nabla_{e_i}X)^\bot}{(\Nabla_{e_i}X)^\bot}
    -\sum_{i,j=1}^n\inner{e_{i}}{\Nabla_{e_j}X}\inner{\Nabla_{e_i}X}{e_j}\\
    &\qquad
    +\sum_{i=1}^n\inner{\overline{R}(X,e_i)X}{e_i}
    -\sum_{i=1}^n\inner{\Nabla_t\overline{X}}{\II(e_i,e_i)}
    +\sum_{i=1}^ne_i\inner{\Nabla_t\overline{X}}{e_i}\\
    &=\sum_{i=1}^n\inner{(\Nabla_{e_i}X)^\bot}{(\Nabla_{e_i}X)^\bot}
    -\sum_{i,j=1}^n\inner{e_{i}}{\Nabla_{e_j}X}\inner{\Nabla_{e_i}X}{e_j}\\
    &\qquad
    +\sum_{i=1}^n\inner{\overline{R}(X,e_i)X}{e_i}
    -\inner{\Nabla_t\overline{X}}{\vbH}
    +\sum_{i=1}^ne_i\inner{\Nabla_t\overline{X}}{e_i},
  \end{align*}
  where $\top$ denotes the projection into the tangent bundle $TM^n$.
  Note that we used $[X,e_i]=0$ in the fourth equality. In addition, the sixth equality follows from \eqref{eq_regular_onb}.
  By the divergence theorem, we can obtain
  \[\int_{M^n}\sum_{i=1}^ne_i\inner{\Nabla_t\overline{X}}{e_i}dV_0=0.\]
  Thus, the second variation can be written as \eqref{eq_second_variation_general}.
\end{proof}


\begin{thebibliography}{00}

  \bibitem{AHUY22}
  S. Akamine, A. Honda, M. Umehara, and K. Yamada,
  {\it Null hypersurfaces as wave fronts in Lorentz-Minkowski space},
  preprint, arXiv:2203.02864.

  \bibitem{AM10}
  L. Andersson and J. Metzger,
  {\it Curvature estimates for stable marginally trapped surfaces},
  J. Differential Geom. {\bf 84} (2010), 231--265.
    
  \bibitem{AD89}
  A. C. Asperti and M. Dajczer,
  {\it Conformally flat Riemannian manifolds as hypersurfaces of the light cone},
  Canad. Math. Bull. {\bf 32} (1989), 281--285.

  \bibitem{EGM09}
  J. Espinar, J. G{\'{a}}lvez, and P. Mira,
  {\it Hypersurfaces in {$\mathbb{H}^{n+1}$} and conformally invariant equations: the generalized Christoffel and Nirenberg problems},
  J. Eur. Math. Soc. {\bf 11} (2009), 903--939.

  \bibitem{HI15}
  A. Honda and S. Izumiya,
  {\it The lightlike geometry of marginally trapped surfaces in Minkowski space-time},
  J. Geom. {\bf 106} (2015), 185--210.

  \bibitem{Izumiya09}
  S. Izumiya,
  {\it Legendrian dualities and spacelike hypersurfaces in the lightcone},
  Mosc. Math. J. {\bf 9} (2009), 325--357.

  \bibitem{LJ08}
  H. Liu and S. D. Jung,
  {\it Hypersurfaces in lightlike cone},
  J. Geom. Phys. {\bf 58} (2008), 913--922.

  \bibitem{LUY11}
  H. Liu, M. Umehara, and K. Yamada,
  {\it The duality of conformally flat manifolds},
  Bull. Braz. Math. Soc. {\bf 42} (2011), 131--152.

  \bibitem{SW01}
  R. Schoen and J. Wolfson,
  {\it Minimizing Area Among Lagrangian Surfaces: The Mapping Problem},
  J. Differential Geom. {\bf 58} (2001), 1--86.

\end{thebibliography}
\end{document}